\title{Devaney chaos of multiple mappings from a set-valued view}
\author{Yingcui Zhao}
\begin{document}
\newtheorem{theorem}{Theorem}[section]
\newtheorem{corollary}[theorem]{Corollary}
\newtheorem{lemma}[theorem]{Lemma}
\newtheorem{proposition}[theorem]{Proposition}
\newtheorem{problem}[theorem]{Problem}
\newtheorem{maintheorem}[theorem]{Main Theorem}
\newtheorem{definition}[theorem]{Definition}
\newtheorem{remark}[theorem]{Remark}
\newtheorem{example}[theorem]{Example}
\newtheorem{claim}{Claim}[section]
\maketitle

\begin{abstract}
We introduce the definitions of periodic point, transitivity, sensitivity and Devaney chaos of multiple mappings from a set-valued perspective. We study the relation between multiple mappings and its continuous self-maps and show that multiple mappings and its continuous self-maps do not imply each other in terms of periodic point or transitivity. While the sufficient condition for multiple mappings to have periodic points, to be transitive and sensitive is provided separately. And we show that transitivity plus dense periodic point set implies sensitivity. Also, a sufficient condition for multiple mappings to be Devaney chaotic is given.
\end{abstract}

\section{Introduction}
A dynamical system typically refers to the pair $(X,f)$, in which $X$ is a compact metric space with a metric $d$ and $f$ is a continuous self-map on $X$. One of the most interesting features of dynamical systems is that the trajectories of nearby points deviate after a finite number of steps. This is also one of the important characteristics used to describe chaotic behavior in systems. This concept, commonly known as the "butterfly effect", has been extensively studied and is referred to as sensitivity to initial conditions in \cite{sen}. And in the theory of dynamical systems, chaos is an important phenomenon. %In the study of chaos, the concepts of sensitivity and transitivity are key components.

 Let $\mathbb{N}=\{0,1,2,\cdots\}$, $\mathbb{Z^+}=\{1,2,3,\cdots\}$. The point $x\in X$ is said to be a \emph{periodic point} of $f$ if there exists $n\in\mathbb{Z^+}$ such that $f^n(x)=x$. The \emph{period of $x$} is the smallest number $n\in\mathbb{Z^+}$ satisfying $f^n(x)=x$. Specifically, if $n=1$ we say $x$ is a fixed point of $f$. We denote the set of all periodic points for $f$ by $P(f)$.	We say that the dynamical system $(X,f)$ (or the map $f$) is \emph{(topologically) transitive}, if for any nonempty open sets $U,V\subset X$, there exists $n\in\mathbb{Z^+}$ such that $f^n(U)\bigcap V\neq\emptyset$. $f$ is \emph{sensitive}, if there exists $\delta>0$ such that for any nonempty open set $U\subset X$, there exist $x,y\in U$ and $n\in \mathbb{Z^+}$ such that $d(f^n(x),f^n(y))>\delta$.
 We say that the dynamical system $(X,f)$ (or the map $f$) is \emph{Devaney chaotic}\cite{Devaney1989}, if it holds all of the following conditions:
 \begin{enumerate}
 	\item [(1)]$f$ is transitive.
 	\item[(2)]$\overline{P(f)}=X$, i. e., the set of periodic points of $f$ is dense in $X$.
 	\item[(3)]$f$ is sensitive.
 \end{enumerate}
% \end{definition}
It is worth mentioning that Banks et al. \cite{Bank1992} claimed that the above conditions (1) plus (2) implies (3).
 
 In 2016, Hou and Wang\cite{HouWang2016} defined multiple mappings derived from iterated function system. Iterated function system is an important branch of fractal theory, reflecting the essence of the world. They are one of the three frontiers of nonlinear science theory. Hou and Wang's focus was primarily on studying the Hausdorff metric entropy and Friedland entropy of multiple mappings. Additionally, they introduced the notions of Hausdorff metric Li-Yorke chaos and Hausdorff metric distributional chaos from a set-valued perspective. It is worth noting that researchers studying iterated function systems often approach the topic from a group perspective rather than a set-valued perspective. This also establishes a close connection between multiple mappings and set-valued mappings, or we can consider multiple mappings as a special case of set-valued mappings.
 
 It is important to acknowledge the valuable role of set-valued mappings in addressing complex problems involving uncertainty, ambiguity, or multiple criteria. Set-valued mappings offer versatility and flexibility, making them highly beneficial across various fields. One prominent application of set-valued mappings is in optimization problems, where the objective is to identify the optimal set of solutions. For instance, in multi-objective optimization, a set-valued mapping can represent the Pareto front, encompassing all non-dominated solutions.
 
 Set-valued mappings also prove useful in decision-making processes that require considering multiple criteria or preferences. By representing feasible solutions as sets, decision-makers can thoroughly analyze and compare different options, enabling them to make well-informed decisions. Additionally, set-valued mappings find applications in data analysis tasks such as clustering and classification. Unlike assigning each data point to a single category, set-valued mappings can represent uncertainty or ambiguity by assigning data points to multiple categories. In fact, the applications of set-valued mappings are vast and diverse, encompassing numerous fields beyond those mentioned here.

In \cite{ziji}, we studied that if multiple mappings $F$ has a disdributionally chaotic
pair, especially $F$ is distributionally chaotic, $S\Omega(F)$
contains at least two points and gives a sufficient condition for
$F$ to be distributionally chaotic in a sequence and chaotic in the
strong sense of Li-Yorke. Zeng et al. \cite{zeng2020} proved two topological conjugacy dynamical systems to multiple mappings have simultaneously Hausdorff metric Li-Yorke chaos or Hausdorff distributional chaos and the multiple mappings $F=\{f_1,f_2\}$ and its $2$-tuple of continuous self-maps $f_1,f_2$ are not mutually implied in terms of Hausdorff metric Li-Yorke chaos.

The current paper aims to consider the image of one point under multiple mappings as a set (a compact set). %We primarily consider the sensitivity of multiple mappings and the relationship between multiple mappings $F=\{f_1,f_2\}$ and its $2$-tuple of continuous self-maps $f_1,f_2$ in terms of periodic point and transitivity.  
The specific layout of the present paper is
as follows. Some preliminaries and definitions are introduced in Section \ref{sec2}.
Then we study the relation between multiple mappings and its continuous self-maps in terms of periodic point, transitivity and sensitivity in Section \ref{sec3}, and the Devaney chaos of multiple mappings in Section \ref{sec4}. At last, we state the conclusions of this paper in Section \ref{sec5}.

\section{Preliminaries}\label{sec2}
Let $F=\{f_1,f_2,\cdots,f_n\}$ be a \emph{multiple mappings} with $n$-tuple of continuous self-maps on $X$. Note that for any $x\in X$, $F(x)=\{f_1(x),f_2(x),\cdots,f_n(x)\}\subset X$ is compact. Let $$\mathbb{K}(X)=\{K\subset X|K~\text{is nonempty and compact}\}.$$
Then $F$ is from $X$ to $\mathbb{K}(X)$. The Hausdorff metric $d_H$ on $\mathbb{K}(X)$ is defined by $$d_H(A,B)=\max\{\sup_{a\in A}\inf_{b\in B}d(a,b),\sup_{b\in B}\inf_{a\in A}d(a,b)\},\forall A,B\subset X.$$ It is clear that $\mathbb{K}(X)$ is a compact metric space with the Hausdorff metric $d_H$. 

For convenience, let us study multiple mappings on compact metric spaces from a set-valued view, using the example of examining two continuous self-maps. The definitions and conclusions presented in this paper can be easily extended to the case of a multiple mappings formed by any finite number of continuous self-maps. Consider the multiple mappings $F=\{f_1,f_2\}$. For any $n>0$, $F^n:X\rightarrow\mathbb{K}(X)$ is defined by for  any $x\in X$,$$F^n(x)=\{f_{i_1}f_{i_2}\cdots f_{i_n}(x)|i_1,i_2,\cdots,i_n=1\text{ or }2\}.$$
It is obvious that $F^n(x)\in \mathbb{K}(X)$. For any $A\subset X$, let $$F^n(A)=\{f_{i_1}f_{i_2}\cdots f_{i_n}(a)|a\in A, i_1,i_2,\cdots,i_n=1\text{ or }2\}=\bigcup_{a\in A}F^n(a).$$
Particularly, if $A\in\mathbb{K}(X)$, $F^n(A)\in\mathbb{K}(X)$. Then, $F$ naturally induces a continuous self-map on $\mathbb{K}(X)$, denoted by $\widetilde{F}:\mathbb{K}(X)\rightarrow\mathbb{K}(X)$.

Throughout this paper, 	Suppose that $F=\{f_1,f_2\}$ be a multiple mappings with $2$-tuple of continuous self-maps on $X$. Now, we define the concepts of periodic point, transitivity, sensitivity and Devaney chaos for multiple mappings $F=\{f_1,f_2\}$ from a set-valued view. 

\begin{definition}
	For any $x\in X$, we say $\{F(x),F^2(X),F^3(X),\cdots\}:=\mathring{orb}(x,F)$ is the deleted orbit of $x$ under $F$. 
\end{definition}

\begin{definition}\label{pfp}
	$x\in X$ is said to be a periodic point of $F$, if the deleted orbit of $x$ under $F$ $\mathring{orb}(x,F)$ is finite and there exists $m>0$ such that $x\in F^m(x)$. The cardinal number of $\mathring{orb}(x,F)$ denoted by $\sharp\mathring{orb}(x,F)$ is said to be the period (or minimum positive period) of $x$ for $F$. Especially, if $F(x)=\{x\}$, $x$ is said to be a fixed point for $F$. The set of all period points of $F$ is denoted by $P(F)$.
\end{definition}
\begin{definition}\label{newde1}
 We say $F$ is 
 	\begin{enumerate}	
 	\item[(1)] \emph{(Hausdorff metric) sensitive}, if there exists $\delta>0$ such that for any nonempty open set $U\subset X$, there exist $x,y\in U$ and $n\in \mathbb{Z^+}$ such that $d_H(F^n(x),F^n(y))>\delta$. 
\item[(2)] \emph{(topologically) transitive}, if for any nonempty open sets $U\subset X$ and $\mathcal{U}\subset Ran(F):=\{F^n(x)\mid n\geq 1,x\in X\}$, there exists $n\in\mathbb{Z^+}$ such that $$\{F^n(u)\mid u\in U\}\bigcap\mathcal{U}\neq\emptyset.$$
		
	%	\item[(2)]\emph{weakly mixing}, if for any nonempty open sets $U,V\subset X$ and $\mathcal{U},\mathcal{V}\subset Ran(F)$, there exists $n\in\mathbb{Z^+}$ such that $$\{F^n(u)\mid u\in U\}\bigcap\mathcal{U}\neq\emptyset~and~\{F^n(v)\mid v\in V\}\bigcap\mathcal{V}\neq\emptyset.$$
	\end{enumerate}
\end{definition}
\begin{definition}
	We say that $F$ is \emph{Devaney chaotic}, if it holds all of the following conditions:
	\begin{enumerate}
		\item [(1)]$F$ is transitive.
		\item[(2)]$\overline{P(F)}=X$, i. e., the set of periodic points of $F$ is dense in $X$.
		\item[(3)]$F$ is sensitive.
	\end{enumerate}
\end{definition}

Clearly, the periodic point, sensitivity and transitivity of multiple mappings, in the case of degradation (where the multiple mappings consists of only one continuous self-map), is the same as the periodic point, sensitivity and transitivity of a classical single continuous self-map. Next we provide an example to illustrate the existence of the newly defined concept Definition \ref{newde1}. 
\begin{example}
	Consider the multiple map defined on $[0,1]$ as $F=\{f_1,f_2\}$, in which 
	\begin{equation*}
		\begin{aligned}
				f_1(x)=\begin{cases}
						2x, & 0\leq x\leq\frac{1}{2}, \\
						2-2x, & \frac{1}{2}<x\leq 1,
					\end{cases}
		\end{aligned}
	\begin{aligned}
			f_2(x)=\begin{cases}
					1-2x, & 0\leq x\leq\frac{1}{2}, \\
					2x-1, & \frac{1}{2}<x\leq 1.
				\end{cases}
	\end{aligned}
	\end{equation*}

%	\begin{equation*}
%		f_1(x)=\begin{cases}
%			2x, & 0\leq x\leq\frac{1}{2}, \\
%			2-2x, & \frac{1}{2}<x\leq 1,
%		\end{cases}
%	\end{equation*}
%	\begin{equation*}
%		f_2(x)=\begin{cases}
%			1-2x, & 0\leq x\leq\frac{1}{2}, \\
%			2x-1, & \frac{1}{2}<x\leq 1.
%		\end{cases}
%	\end{equation*}
It is can be verified that for any $x\in [0,1]$ and any $n>0$, $F^n(x)=\{f_1^n(x),f_2^n(x)\}$ and $f_1^n(x)+f_2^n(x)=1$.  Let $U$ be nonempty open set of $[0,1]$. Then there exist $x,y\in U$ and $n\neq m$ such that $f_1^n(x)=f_2^n(x)=\frac{1}{2}$ and $f_1^m(y)=f_2^m(y)=\frac{1}{2}$. Without loss of generality, let $n>m$. Then, $F^n(x)=\{\frac{1}{2}\}$ and $F^n(y)=\{0,1\}$. Thus, $d_H(F^n(x),f^n(y))>\frac{1}{2}$. So, $F$ is sensitive.

\end{example}
\begin{example}\label{e1}
	Consider the multiple map defined on $[0,1]$ as $F=\{f_1,f_2\}$, in which $f_1(0)=f_1(1)=0$ and $f_2(0)=f_2(1)=1$. Then $Ran(F)=\{\{0,1\}\}$ and $F(0)=\{0,1\}=F(1)$. So, $F$ is transitive.
\end{example}
\section{Relation Between $F$ and $f_1,f_2$}\label{sec3}
A natural question is what is the implication between the fixed point / periodic point / transitivity / sensitivity / Devaney chaos of multiple mappings $F=\{f_1,f_2\}$ and the corresponding properties of its $2$-tuple of continuous self-maps $f_1,f_2$?
\subsection{Peiodic points}

First, according to Definition \ref{pfp}, the following proposition can be directly obtained.
\begin{proposition}
	$x$ is a fixed point of $F$ if and only if $x$ is a common fixed point of $f_1$ and $f_2$. 
\end{proposition}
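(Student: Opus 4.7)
The plan is to unpack the definitions on both sides and observe that the equivalence is essentially immediate. By Definition \ref{pfp}, $x$ is a fixed point of $F$ precisely when $F(x) = \{x\}$. On the other hand, by the construction of $F^1$ on points, $F(x) = \{f_1(x), f_2(x)\}$.

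For the forward direction, I would assume $F(x) = \{x\}$ and observe that this set equality forces both elements $f_1(x)$ and $f_2(x)$ to coincide with $x$, so $f_1(x) = x$ and $f_2(x) = x$, making $x$ a common fixed point. For the reverse direction, if $f_1(x) = f_2(x) = x$, then $F(x) = \{f_1(x), f_2(x)\} = \{x\}$, so $x$ is a fixed point of $F$ in the sense of Definition \ref{pfp}.

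There is no real obstacle here; the statement is essentially a tautology once one notes that $\{f_1(x), f_2(x)\} = \{x\}$ as sets is equivalent to $f_1(x) = x$ and $f_2(x) = x$. The only minor subtlety worth mentioning explicitly is that the set $\{f_1(x), f_2(x)\}$ may have either one or two elements, but in either case equality with the singleton $\{x\}$ forces every listed element to be $x$. A two- or three-line proof suffices.
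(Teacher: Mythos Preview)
Your proposal is correct and matches the paper's own treatment: the paper simply states that the proposition follows directly from Definition~\ref{pfp} and gives no further argument. Your unpacking of $F(x)=\{f_1(x),f_2(x)\}=\{x\}$ into the two equalities $f_1(x)=x$ and $f_2(x)=x$ is exactly the immediate verification the paper has in mind.
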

While, the periodic point of $F$ may not necessarily be the period point of $f_1$ or $f_2$.
\begin{example}\label{yyzeng}
	Consider the multiple mappings defined on $[0,1]$ as $F=\{f_1,f_2\}$, in which 
		\begin{equation*}
		\begin{aligned}
		f_1(x)=\begin{cases}
	2x, & 0\leq x\leq\frac{1}{2}, \\
	1, & \frac{1}{2}<x\leq 1,
\end{cases}
		\end{aligned}
		\begin{aligned}
		f_2(x)=\begin{cases}
	1, & 0\leq x\leq\frac{1}{2}, \\
	2-2x, & \frac{1}{2}<x\leq 1.
\end{cases}
		\end{aligned}
	\end{equation*}

\begin{itemize}
	\item [(1)]Since $\mathring{orb}(\frac{2}{7},F)=\{\{\frac{4}{7},1\},\{0,\frac{6}{7},1\},\{0,\frac{2}{7},1\},\{0,\frac{4}{7},1\}\}$, then $\frac{2}{7}$ is a periodic point
	 with period $4$ of $F$. 
	\item[(2)]Since $f_1^n(\frac{2}{7})=1$, $\forall n\geq 2$, then $\frac{2}{7}$ is not a periodic point of $f_1$.
	\item[(3)]Since for any even number $n>0$, $f_2^n(\frac{2}{7})=1$ and for any odd number $m>0$, $f_2^m(\frac{2}{7})=0$, then $\frac{2}{7}$ is not a periodic point of $f_2$.
\end{itemize}
\end{example}
The converse may not necessarily hold true, that is, the common periodic point of $f_1$ and $f_2$ may not necessarily be the period point of $F$.
\begin{example}
		Consider the multiple mappings defined on $\{\frac{1}{n}\mid n=1,2,\cdots\}\bigcup\{0\}$ as $F=\{f_1,f_2\}$, in which $f_1:1\rightarrow \frac{1}{2} \rightarrow \frac{1}{3}\rightarrow 1$, $f_1(\frac{1}{n})=\frac{1}{n+1}$, $n=4,5,\cdots$ and $f_2:1\rightarrow \frac{1}{4} \rightarrow \frac{1}{5}\rightarrow 1$, $f_2(\frac{1}{3})=\frac{1}{6}$, $f_2(\frac{1}{n})=\frac{1}{n+1}$, $n=2,6,7,8,\cdots$
	\begin{itemize}
		\item [(1)]It is easy to see that $1$ is a periodic point with period $3$ of both $f_1$ and $f_2$.
		\item[(2)]Since $F: 1\rightarrow \{\frac{1}{2},\frac{1}{4}\}\rightarrow \{\frac{1}{3},\frac{1}{5}\}\rightarrow \{\frac{1}{6},1\}\rightarrow \{\frac{1}{2},\frac{1}{4},\frac{1}{7}\}\rightarrow\cdots$, $1$ is not a periodic point of $F$.
	\end{itemize}
\end{example}

Although the common periodic point of $f_1$ and $f_2$ may not necessarily be the period point of $F$,  $f_1f_2=f_2f_1$ makes it true. Furthermore, we also provide an additional sufficient condition for some point to be a periodic point of multiple mappings.
\begin{theorem}
	If $f_1f_2=f_2f_1$ and there exists $p\in X$ such that $f_1^n(p)=p$ for some $n>0$ and $f_2^m(p)=p$ for some $m>0$, then $p$ is a periodic point of $F$.
\end{theorem}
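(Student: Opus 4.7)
The plan is to exploit the commutativity $f_1 f_2 = f_2 f_1$ to collapse every word $f_{i_1}\cdots f_{i_k}$ into the canonical form $f_1^a f_2^b$, and then use the two periodicities to reduce the exponents modulo $n$ and $m$ respectively. This will bound the union $\bigcup_{k\ge 1} F^k(p)$ inside a finite set with at most $nm$ elements, which forces $\mathring{orb}(p,F)$ to be finite, and a direct choice of iterate will return $p$ to itself.

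Concretely, I would first argue by induction on word length that commutativity lets us reorder any composition, so that
\[
F^k(p)=\{f_1^a f_2^b(p)\mid a,b\ge 0,\ a+b=k\}.
\]
Next I would show that $f_1^a f_2^b(p)$ depends only on the residue classes $(a \bmod n,\ b\bmod m)$. The nontrivial case is reducing $a$ modulo $n$ inside a mixed expression: if one wants to replace $f_1^{a}$ by $f_1^{a\bmod n}$ in the product $f_1^{a} f_2^{b}(p)$, one uses commutativity to push the $f_1$'s past the $f_2$'s, writing $f_1^{a}f_2^{b}(p)=f_2^{b}f_1^{a}(p)=f_2^{b}f_1^{a\bmod n}(p)=f_1^{a\bmod n}f_2^{b}(p)$. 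The same manoeuvre reduces $b$ modulo $m$. This is really the only subtlety, and it is where the hypothesis $f_1f_2=f_2f_1$ is essential.

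With this reduction in hand, every set $F^k(p)$ is contained in the fixed finite set
\[
S:=\{f_1^{a}f_2^{b}(p)\mid 0\le a<n,\ 0\le b<m\},
\]
which has at most $nm$ elements. Hence the family $\{F^k(p):k\ge 1\}$ consists of subsets of a finite set, and therefore can itself take only finitely many distinct values; this gives the first condition in Definition~\ref{pfp}, namely that $\mathring{orb}(p,F)$ is finite. For the second condition, I would take the iterate $k=nm$: among the elements of $F^{nm}(p)$ one has $f_1^{nm}(p)=(f_1^n)^m(p)=p$, so $p\in F^{nm}(p)$. Together these two facts show that $p$ is a periodic point of $F$ in the sense of Definition~\ref{pfp}.

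The main obstacle, as noted, is the normal-form step: turning an arbitrary word in $f_1,f_2$ into $f_1^a f_2^b$ and then trimming exponents modulo the individual periods. Once commutativity is used to do the reordering, the modular reduction is immediate; without commutativity the argument breaks down, which is consistent with the counterexample given just before the theorem.
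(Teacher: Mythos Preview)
Your proposal is correct and follows essentially the same route as the paper: use commutativity to put every word into the normal form $f_1^{a}f_2^{b}$, reduce the exponents modulo $n$ and $m$ to force $\bigcup_{k\ge 1}F^k(p)$ into a finite set (hence $\mathring{orb}(p,F)$ is finite), and then observe that $p$ lies in some iterate. Your bound $|S|\le nm$ is sharper and more transparent than the paper's combinatorial expression, and while you take $k=nm$ to witness $p\in F^k(p)$, the paper simply notes $p=f_1^{n}(p)\in F^{n}(p)$, which would also work for you.
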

\begin{proof}
	Since %the cardinal number of $\mathring{orb}(x,F)$ is not more than 
	$$\sharp\mathring{orb}(x,F)\leq C^1_{(n-1)(m-1)}C^2_{(n-1)(m-1)}\cdots C^{(n-1)(m-1)}_{(n-1)(m-1)}(n-1)(m-1),$$ $\mathring{orb}(x,F)$ is finite. And $p\in F^n(x)$. So, $p$ is a periodic point of $F$.
\end{proof}
\begin{theorem}\label{ppp}
	If $f_1(x)=c$ ($\forall x\in X$, $c$ is a constant), $f_2(c)=c$ and $p\in X$ is a periodic point of $f_2$, then $p$ is a periodic point of $F$.
\end{theorem}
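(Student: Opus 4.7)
The plan is to compute $F^n(p)$ explicitly and then verify the two requirements in Definition \ref{pfp}: that $\mathring{orb}(p,F)$ is finite, and that $p\in F^m(p)$ for some $m>0$.

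First I would analyze the structure of an arbitrary composition $f_{i_1}f_{i_2}\cdots f_{i_n}(p)$ with $i_j\in\{1,2\}$. If at least one index equals $1$, pick the smallest such index $j$, so that $i_1=\cdots=i_{j-1}=2$ while $i_j=1$. Applying $f_1$ produces the constant $c$, and then the remaining $j-1$ applications of $f_2$ leave $c$ fixed because $f_2(c)=c$. Hence every such composition evaluates to $c$. The only composition not containing $f_1$ is $f_2^n$. Therefore
\begin{equation*}
F^n(p)=\{c,\,f_2^n(p)\}\qquad\text{for every }n\geq 1.
\end{equation*}

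Next, because $p\in P(f_2)$, there is some $k>0$ with $f_2^k(p)=p$, and the sequence $(f_2^n(p))_{n\geq 1}$ takes values only in the finite set $\{p,f_2(p),\ldots,f_2^{k-1}(p)\}$. Consequently $\mathring{orb}(p,F)\subseteq\{\{c,f_2^j(p)\}:0\leq j\leq k-1\}$ is finite, which handles the first requirement in Definition \ref{pfp}.

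Finally, for the second requirement, take $m=k$: then $F^k(p)=\{c,f_2^k(p)\}=\{c,p\}$, which contains $p$. Both conditions are met, so $p$ is a periodic point of $F$. I do not anticipate a real obstacle here; the only subtle point is confirming that the leftmost-$f_1$ argument truly collapses every mixed composition, which is exactly where the hypothesis $f_2(c)=c$ is used.
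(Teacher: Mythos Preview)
Your proof is correct and follows essentially the same approach as the paper: both compute $F^n(p)=\{c,f_2^n(p)\}$ and then read off finiteness of $\mathring{orb}(p,F)$ and the inclusion $p\in F^m(p)$ from the $f_2$-periodicity of $p$. You supply a bit more detail than the paper does in justifying why every mixed composition collapses to $c$, but the argument is the same.
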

\begin{proof}
	It is esay to see that $Ran(F)=\{\{c,f_2^n(x)\}\mid  n\geq 1, x\in X\}$. Let $p\in X$ be a periodic point of $f_2$ with a period of $m>0$, then  $$\mathring{orb}(x,F)=\{\{c,p\},\{c,f_2(p)\},\{c,f_2^2(p)\},\cdots,\{c,f_2^{m-1}(p)\}\}$$
	and $p\in \{c,f_2^{m}(p)\}=F^m(p)$. So, $p$ is a periodic point of $F$.

\end{proof}
\subsection{Transitivity}
The Example \ref{e1} shows that the transitivity of multiple mappings $F=\{f_1,f_2\}$ doesn't imply the transitivity of $f_1$ or $f_2$. Then combined with the following example, the transitivity of multiple mappings $F=\{f_1,f_2\}$ and its $2$-tuple of continuous self-maps $f_1,f_2$ do not imply each other.

\begin{example}
	Consider the multiple mappings  $F=\{f_1,f_2\}$ defined on $\{0,1,2\}$, in which $f_1:0\longmapsto1\longmapsto2\longmapsto0$ and $f_2:0\longmapsto2\longmapsto1\longmapsto0$. It is easy to see that both $f_1$ and $f_2$ is transitive. Next we show $F$ is not transitive.
	
	$Ran(F)=\{\{1,2\},\{0,2\},\{0,1\},\{0,1,2\}\}$. Let $U=\{0\}$ and $\mathcal{U}=\{\{0,2\}\}$. Then $U$ is a nonempty open set of $\{0,1,2\}$ and $\mathcal{U}$ is a nonempty open set of $Ran(F)$. While $F:0\mapsto\{1,2\}\mapsto\{0,1,2\}\mapsto\cdots\mapsto\{0,1,2\}\mapsto\cdots$. So, $F$ is not transitive.
\end{example}

Although that both $f_1$ and $f_2$ are transitive can't imply $F=\{f_1,f_2\}$ is transitive, we give a sufficient condition for $F$ to be transitive as Theorem \ref{thFtra}.
\begin{theorem}\label{thFtra}
	If $f_1(x)=c$ ($\forall x\in X$, $c$ is a constant), $f_2(c)=c$ and $f_2$ is transitive, then $F$ is transitive.
\end{theorem}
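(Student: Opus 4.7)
The plan is to reduce the problem to the transitivity of $f_2$ by first getting an explicit formula for iterates of $F$ and then comparing Hausdorff distances to ordinary distances in $X$.

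First I would compute $F^n(x)$ inductively. Because $f_1$ collapses everything to the constant $c$ and $f_2$ fixes $c$, any composition $f_{i_1}\cdots f_{i_n}(x)$ either contains an $f_1$ somewhere — in which case it equals $c$ after absorbing all later $f_2$'s via $f_2(c)=c$ — or it is $f_2^n(x)$. Hence for every $n\ge 1$ and every $x\in X$,
\begin{equation*}
F^n(x)=\{c,f_2^n(x)\},
\end{equation*}
and therefore $\mathrm{Ran}(F)=\{\{c,f_2^n(x)\}\mid n\ge 1,\;x\in X\}$, with the subspace topology coming from $(\mathbb{K}(X),d_H)$.

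Next I would record the elementary estimate $d_H(\{c,a\},\{c,b\})\le d(a,b)$, which follows directly from the definition of $d_H$ since each set contains $c$, so the "stray" point $a$ (resp.\ $b$) has a witness within $d(a,b)$ in the other set. With this in hand, transitivity of $F$ reduces to an approximation problem for $f_2$: given a nonempty open $U\subset X$ and a nonempty open $\mathcal{U}\subset \mathrm{Ran}(F)$, pick any $A=\{c,f_2^k(y)\}\in\mathcal{U}$ and choose $\varepsilon>0$ so that every $B\in\mathrm{Ran}(F)$ with $d_H(A,B)<\varepsilon$ lies in $\mathcal{U}$.

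Now I would apply transitivity of $f_2$ to the open set $U$ and the nonempty open ball $V=B(f_2^k(y),\varepsilon)\subset X$: there exist $n\in\mathbb{Z}^+$ and $u\in U$ with $f_2^n(u)\in V$, i.e.\ $d(f_2^n(u),f_2^k(y))<\varepsilon$. By the formula for $F^n$ and the elementary estimate,
\begin{equation*}
d_H(F^n(u),A)=d_H(\{c,f_2^n(u)\},\{c,f_2^k(y)\})\le d(f_2^n(u),f_2^k(y))<\varepsilon,
\end{equation*}
so $F^n(u)\in\mathcal{U}$, which shows $\{F^n(u)\mid u\in U\}\cap\mathcal{U}\ne\emptyset$. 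The only mild conceptual step is interpreting openness of $\mathcal{U}$ inside $\mathrm{Ran}(F)$ correctly (as the subspace topology from $\mathbb{K}(X)$) so that one can extract the radius $\varepsilon$; everything else is bookkeeping around the identity $F^n(x)=\{c,f_2^n(x)\}$.
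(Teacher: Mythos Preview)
Your proposal is correct and follows essentially the same route as the paper: compute $F^n(x)=\{c,f_2^n(x)\}$, extract from the open set $\mathcal{U}\subset\mathrm{Ran}(F)$ a corresponding open set $V\subset X$, and then invoke transitivity of $f_2$ on $U$ and $V$. The paper simply asserts the existence of $V$ with $\{\{c,v\}\mid v\in V\}\subset\mathcal{U}$, whereas you justify this step explicitly via the estimate $d_H(\{c,a\},\{c,b\})\le d(a,b)$ and a metric ball in the subspace topology; this extra care is fine and arguably cleaner.
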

\begin{proof}
	It is esay to see that $Ran(F)=\{\{c,f_2^n(x)\}\mid n\geq 1, x\in X\}$. Let $U\subset X$ and $\mathcal{U}\subset Ran(F)$ be two nonempty open sets. Then there exists nonempty open set $V\subset X$ such that $\{\{c,v\}\mid v\in V\}\subset\mathcal{U}$. Since $f_2$ is transitive, there exists $n\in\mathbb{Z^+}$ such that $f_2^n(U)\bigcap V\neq\emptyset$. Then there exists $u\in U$ such that $f_2^n(u)\in V$, that is, $F^n(u)\in\mathcal{U}$. So, $F$ is transitive.
	
	%Let $U,V\subset X$ and $\mathcal{U},\mathcal{V}\subset Ran(F)$ be two nonempty open sets. Then there exists nonempty open set $V\subset X$ such that $\{\{c,v\}\mid v\in V\}\subset\mathcal{U}$. Since $f_2$ is transitive, there exists $n\in\mathbb{Z^+}$ such that $f_2^n(U)\bigcap V\neq\emptyset$. Then there exists $u\in U$ such that $f_2^n(u)\in V$, that is, $F^n(u)\in\mathcal{U}$. So, $F$ is transitive.
\end{proof}
\subsection{Sensitivity}
Firstly we show the sensitivity of $F$ can't imply that $f_1$ or $f_2$ is sensitive.
\begin{example}\label{yyzeng}
	Consider the multiple mappings defined on $[0,1]$ as $F=\{f_1,f_2\}$, in which
			\begin{equation*}
		\begin{aligned}
		f_1(x)=\begin{cases}
	2x, & 0\leq x\leq\frac{1}{2}, \\
	1, & \frac{1}{2}<x\leq 1,
\end{cases}
		\end{aligned}
		\begin{aligned}
		f_2(x)=\begin{cases}
	1, & 0\leq x\leq\frac{1}{2}, \\
	2-2x, & \frac{1}{2}<x\leq 1.
\end{cases}
		\end{aligned}
	\end{equation*}
Let
	\begin{equation*}
		f(x)=\begin{cases}
			2x, & 0\leq x\leq\frac{1}{2}, \\
			2-2x, & \frac{1}{2}<x\leq 1.
		\end{cases}
	\end{equation*}
	Then for any $x\in [0,1]$ and any $n\geq 2$, $F^n(x)=\{0,1,f^n(x)\}$. 
	\begin{itemize}
		\item[(1)]As we all know, $f$ is sensitive. Let $\delta>0$ be the sensitive constant for $f$. Then for any nonempty open set $U\subset [0,1]$, there exist $x,y\in U$ and $n>0$ such that $d(f^n(x),f^n(y))>\delta$. Therefore, there exist $x,y\in U$ and $n>0$ such that $d_H(F^n(x),F^n(y))>\delta$. So, $F$ is sensitive.
		\item[(2)]Let $U=(\frac{1}{2},1)$. Then for any $x,y\in U$ and any $n>0$, $f_1^n(x)=f_1^n(y)=1$. So, $f_1$ is not sensitive.
		\item[(3)]Let $U=(0,\frac{1}{2})$. Then for any $x,y\in U$ and any $n>0$, $f_2^n(x)=f_2^n(y)=1$ or $0$. So, $f_2$ is not sensitive.
	\end{itemize}
\end{example}
Considering this implication in reverse, we give a sufficient condition for multiple mappings to be sensitive.
\begin{theorem}\label{thFsen}
	If there exists $\lambda>1$ such that for any nonempty open sets $U,V\subset X$, there exist $i_0=1$ or $2$ such that there exists $x\in U$ and $y\in V$ satisfying $$d(f_{i_0}(x),f_j(y))>\lambda d(x,y), \forall j=1,2,$$
	then $F$ is sensitive.
\end{theorem}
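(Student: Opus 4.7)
The plan is to exhibit a positive sensitivity constant and, for every nonempty open $U \subseteq X$, produce two points in $U$ whose images under some iterate $F^n$ are Hausdorff-separated by more than that constant. The key observation driving the proof is that the hypothesis already yields one step of Hausdorff expansion: if $x, y \in X$ and $i_0 \in \{1, 2\}$ satisfy $d(f_{i_0}(x), f_j(y)) > \lambda\, d(x, y)$ for $j = 1, 2$, then
\[
d_H(F(x), F(y)) \;\geq\; d(f_{i_0}(x), F(y)) \;=\; \min_{j=1,2} d(f_{i_0}(x), f_j(y)) \;>\; \lambda\, d(x, y),
\]
so each expanding pair furnished by the hypothesis is automatically a one-step Hausdorff-expanding pair with factor $\lambda$.

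With this in hand, I would choose $\delta$ to be any value with $0 < \delta < \mathrm{diam}(X)$ (if $X$ is a single point the conclusion is vacuous). Given a nonempty open $U$, apply the hypothesis to $(U, U)$ to obtain a pair $(x_0, y_0) \in U \times U$ and an index $i_0$; note $x_0 \neq y_0$, because otherwise the condition would force $d(f_{i_0}(x_0), f_{i_0}(x_0)) > 0$. Hence $d_H(F(x_0), F(y_0)) > \lambda\, d(x_0, y_0) > 0$, and we are done with $n = 1$ if this already exceeds $\delta$. Otherwise I would iterate: at step $k$, probe small open neighborhoods of $f_{i_{k-1}}(x_{k-1})$ and of a point $f_{j^{*}}(y_{k-1}) \in F(y_{k-1})$ realizing the relevant minimum, apply the hypothesis there, and use the uniform continuity of $f_1, f_2$ on the compact space $X$ to keep the new pair close to the prescribed composition iterates of $x_0, y_0$ while (almost) preserving the factor $\lambda$. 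Since $\lambda > 1$, the geometric $\lambda^k$ growth eventually dominates the accumulated continuity error and yields $d_H(F^n(x_0), F^n(y_0)) > \delta$ for some $n$.

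The main obstacle is precisely this iteration step. The hypothesis only supplies expanding pairs in the open sets we probe, not at prescribed points, so chaining the expansions into genuine iterates of a single pair $(x_0, y_0) \in U$ requires careful continuity bookkeeping that keeps the probing perturbations small relative to the $\lambda$-factor growth at every stage. Moreover, because the pointwise inequality in the hypothesis holds for both $j = 1, 2$ simultaneously, the chosen branch $f_{i_{k-1}}(x_{k-1})$ stays separated from \emph{every} element of $F(y_{k-1})$, which is what allows the Hausdorff-distance expansion to propagate through the two-valued branching of $F$ rather than degenerating after a step or two.
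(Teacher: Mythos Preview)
Your plan is essentially the paper's proof: start with an expanding pair $(x_0,y_0)\in U$ supplied by the hypothesis applied to $(U,U)$, then iteratively apply the hypothesis to shrinking balls around the composed images and use uniform continuity of $f_1,f_2$ on the compact space $X$ to keep the accumulated perturbation error below the $\lambda$-growth. The paper carries out exactly the bookkeeping you flag as the ``main obstacle,'' choosing at stage $k$ tolerances $\delta_k<\tfrac{\lambda^{k}d(x_0,y_0)}{4k(k+1)}$ and $\eta_k=\tfrac{\lambda^{k+1}d(x_0,y_0)}{4k(k+1)}$ to obtain the inductive lower bound $d\bigl(f_{\alpha_{k+1}}\cdots f_{\alpha_1}(x_0),\,f_{\beta_{k+1}}\cdots f_{\beta_1}(y_0)\bigr)>\tfrac{\lambda^{k+1}}{k+1}\,d(x_0,y_0)$ and the sensitivity constant $\tfrac{\operatorname{diam}X}{2}$.
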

\begin{proof}
	Let $U$ be a nonempty open set of $X$. Select $x_0\neq y_0\in U$ satisfying $$d(f_{\alpha_1}(x_0),f_{\beta_1}(y_0))>\lambda d(x_0,y_0),\forall \beta_1=1,2,\exists\alpha_1=1,2.$$ Select $n$ with $\frac{\lambda^{n+1}d(x_0,y_0)}{n+1}>\frac{diam X}{2}$.
	
	\begin{itemize}
		\item [Step $1$:] Take $\eta_1=\frac{\lambda^2 d(x_0,y_0)}{8}>0$. By $f_1$ and $f_2$ are continuous and $X$ is compact, there exists $0<\delta_1<\frac{\lambda d(x_0,y_0)}{8}$ such that for any $x,y\in X$, $$d(x,y)<\delta_1\Rightarrow d(f_i(x),f_i(y))<\eta_1,\forall i=1,2.$$
		
		Then, there exist $u_{\alpha_1}\in B(f_{\alpha_1}(x_0),\delta_1)$ and $v_{\beta_1}\in B(f_{\beta_1}(y_0),\delta_1)$ such that 
		$$d(f_{\alpha_2}(u_{\alpha_1}),f_{\beta_2}(v_{\beta_1}))>\lambda d(u_{\alpha_1},v_{\beta_1}),\forall \beta_2=1,2,\exists\alpha_2=1,2.$$
		
		And $$d(u_{\alpha_1},v_{\beta_1})>d(f_{\alpha_1}(x_0),f_{\beta_1}(y_0))-d(f_{\alpha_1}(x_0),u_{\alpha_1})-d(v_{\beta_1},f_{\beta_1}(y_0))>\lambda d(x_0,y_0)-2\delta_1.$$ Then, $$d(f_{\alpha_2}(u_{\alpha_1}),f_{\beta_2}(v_{\beta_1}))>\lambda^2 d(x_0,y_0)-2\delta_1\lambda.$$ Thus,
		\begin{align}
			&d(f_{\alpha_2}f_{\alpha_1}(x_0),f_{\beta_2}f_{\beta_1}(y_0)) \nonumber \\   % &在=左边就不用{}
			>&d(f_{\alpha_2}(u_{\alpha_1}),f_{\beta_2}(v_{\beta_1}))-d(f_{\alpha_2}f_{\alpha_1}(x_0),f_{\alpha_2}(u_{\alpha_1}))-d(f_{\beta_2}(v_{\beta_1}),f_{\beta_2}f_{\beta_1}(y_0))\nonumber\\
			>&\lambda^2 d(x_0,y_0)-2\delta_1\lambda-2\eta_1\nonumber\\
			>&\lambda^2 d(x_0,y_0)-2\lambda\frac{\lambda d(x_0,y_0)}{8}-2\lambda\frac{\lambda^2 d(x_0,y_0)}{8}\nonumber\\
			=&\frac{\lambda^2}{2}d(x_0,y_0).\nonumber
		\end{align}
		\item [Step $2$:] Take $\eta_2=\frac{\lambda^3 d(x_0,y_0)}{24}>0$. By $f_1$ and $f_2$ are continuous and $X$ is compact, there exists $0<\delta_2<\frac{\lambda^2 d(x_0,y_0)}{24}$ such that for any $x,y\in X$, $$d(x,y)<\delta_2\Rightarrow d(f_i(x),f_i(y))<\eta_2,\forall i=1,2.$$
		
		Then, there exist $u_{\alpha_2}\in B(f_{\alpha_2}f_{\alpha_1}(x_0),\delta_2)$ and $v_{\beta_2}\in B(f_{\beta_2}f_{\beta_1}(y_0),\delta_2)$ such that 
		$$d(f_{\alpha_3}(u_{\alpha_2}),f_{\beta_3}(v_{\beta_2}))>\lambda d(u_{\alpha_2},v_{\beta_2}),\forall \beta_3=1,2,\exists\alpha_3=1,2.$$
		
		And
		\begin{align}
			&d(u_{\alpha_2},v_{\beta_2}) \nonumber \\   % &在=左边就不用{}
			>&d(f_{\alpha_2}f_{\alpha_1}(x_0),f_{\beta_2}f_{\beta_1}(y_0))-d(f_{\alpha_2}f_{\alpha_1}(x_0),u_{\alpha_2})-d(v_{\beta_2},f_{\beta_2}f_{\beta_1}(y_0))\nonumber\\
			>&\frac{\lambda^2}{2} d(x_0,y_0)-2\delta_2.\nonumber
		\end{align}
		
		Then,
		\begin{align}
			&d(f_{\alpha_3}f_{\alpha_2}f_{\alpha_1}(x_0),f_{\beta_3}f_{\beta_2}f_{\beta_1}(y_0)) \nonumber \\   % &在=左边就不用{}
			>&d(f_{\alpha_3}(u_{\alpha_2}),f_{\beta_3}(v_{\beta_2}))-d(f_{\alpha_3}f_{\alpha_2}f_{\alpha_1}(x_0),f_{\alpha_3}(u_{\alpha_2}))-d(f_{\beta_3}(v_{\beta_2}),f_{\beta_3}f_{\beta_2}f_{\beta_1}(y_0))\nonumber\\
			>&\frac{\lambda^3}{2} d(x_0,y_0)-2\delta_2\lambda-2\eta_2 \nonumber\\
			>&\frac{\lambda^3}{2} d(x_0,y_0)-2\lambda\frac{\lambda^2 d(x_0,y_0)}{24}-2\frac{\lambda^3 d(x_0,y_0)}{24}\nonumber\\
			=&\frac{\lambda^3}{3}d(x_0,y_0).\nonumber
		\end{align}			
		\item[$\cdots$ $\cdots$]
		
		\item[Step $n$:] The same goes for this step $n$. $\eta_n=\frac{\lambda^{n+1} d(x_0,y_0)}{4n(n+1)}>0$. By $f_1$ and $f_2$ are continuous and $X$ is compact, there exists $0<\delta_n<\frac{\lambda^n d(x_0,y_0)}{4n(n+1)}$ such that for any $x,y\in X$, $$d(x,y)<\delta_n\Rightarrow d(f_i(x),f_i(y))<\eta_n,\forall i=1,2.$$
		
		Then, there exist $u_{\alpha_n}\in B(f_{\alpha_{n}}\cdots f_{\alpha_1}(x_0),\delta_n)$ and $v_{\beta_n}\in B(f_{\beta_{n}}\cdots f_{\beta_1}(y_0),\delta_n)$ such that 
		$$d(f_{\alpha_{n+1}}(u_{\alpha_n}),f_{\beta_{n+1}}(v_{\beta_n}))>\lambda d(u_{\alpha_n},v_{\beta_n}),\forall \beta_n=1,2,\exists\alpha_n=1,2.$$

		And
		\begin{align}
			&d(u_{\alpha_n},v_{\beta_n}) \nonumber \\   % &在=左边就不用{}
			>&d(f_{\alpha_{n}}\cdots f_{\alpha_1}(x_0),f_{\beta_{n}}\cdots f_{\beta_1}(y_0))-d((f_{\alpha_{n}}\cdots f_{\alpha_1}(x_0),u_{\alpha_n})-d(v_{\beta_2},f_{\beta_{n}}\cdots f_{\beta_1}(y_0))\nonumber\\
			>&\frac{\lambda^n}{n} d(x_0,y_0)-2\delta_n.\nonumber
		\end{align}
		
		Then,
		\begin{align}
			&d(f_{\alpha_{n+1}}\cdots f_{\alpha_1}(x_0),f_{\beta_{n+1}}\cdots f_{\beta_1}(y_0)) \nonumber \\   % &在=左边就不用{}
			>&d(f_{\alpha_{n+1}}(u_{\alpha_n}),f_{\beta_{n+1}}(v_{\beta_n}))-d(f_{\alpha_n}\cdots f_{\alpha_1}(x_0),f_{\alpha_{n+1}}(u_{\alpha_n}))-d(f_{\beta_{n+1}}(v_{\beta_n}),f_{\beta_{n+1}}\cdots f_{\beta_1}(y_0))\nonumber\\
			>&\frac{\lambda^{n+1}}{n} d(x_0,y_0)-2\delta_n\lambda-2\eta_n \nonumber\\
			>&\frac{\lambda^{n+1}}{n} d(x_0,y_0)-2\frac{\lambda^{n+1}}{4n(n+1)} d(x_0,y_0)-2\frac{\lambda^{n+1}}{4n(n+1)}d(x_0,y_0) \nonumber\\
			=&\frac{\lambda^{n+1}}{n+1}d(x_0,y_0)>\frac{diam X}{2}.\nonumber
		\end{align}		
	\end{itemize}	
	
	So, $F$ is sensitive.
\end{proof}
Now let's illustrate Theorem \ref{thFsen}.
\begin{example}
	Let $S^1$ be the unit circle on the complex plane. Define the metric on $S^1$ by $d(e^{i\alpha},e^{i\beta})=\frac{\mid \alpha-\beta\mid }{2\pi}$ for any $e^{i\alpha},e^{i\beta}\in S^1$. Consider the multiple mappings defined on $S^1$ as $F=\{f_1,f_2\}$, in which $f_1(e^{i\alpha})=e^{i2\alpha}$ and $f_2(e^{i\alpha})=e^{i3\alpha}$. 
	
	%	Now we show $F$ is accessible by Theorem \ref{thFsen}.
	Let $U,V\subset X$ be nonempty open sets. Without loss of generality, there exists $i_0=2$ such that there exist $e^{i\alpha}\in U$ and $e^{i\beta}\in V$ with $\alpha>\beta$ satisfy $$d(f_2(e^{i\alpha}),f_2(e^{i\beta}))=d(e^{i3\alpha},e^{i3\beta})=\frac{(3\alpha-3\beta)}{2\pi}=3d(e^{i\alpha},e^{i\beta})>\frac{3}{2}d(e^{i\alpha},e^{i\beta}),$$ 
	and	
	$$d(f_2(e^{i\alpha}),f_1(e^{i\beta}))=d(e^{i3\alpha},e^{i2\beta})=\frac{(3\alpha-2\beta)}{2\pi}>\frac{(2\alpha-2\beta)}{2\pi}>\frac{3}{2}d(e^{i\alpha},e^{i\beta}).$$
	
	So, $F$ is sensitive.
\end{example}

\section{Devaney chaos}\label{sec4}
Banks et al. \cite{Bank1992} proved that if $(X,f)$ is transitive and its periodic point set is dense in $X$ then $f$ is sensitive. Now we prove a similar conclusion for multiple mappings, but it is important to note that the proof process is not identical.
\begin{theorem}\label{dc}
	If $F$ is transitive and $\overline{P(F)}=X$, then $F$ is sensitive.
\end{theorem}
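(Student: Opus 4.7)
The approach is to adapt the classical argument of Banks, Brooks, Cairns, Davis, and Stacey (transitivity plus dense periodic points implies sensitivity for a single continuous self-map) to the Hausdorff-metric setting. First, assuming $X$ is nondegenerate (otherwise sensitivity is vacuous), I would use $\overline{P(F)} = X$ to fix a sensitivity constant $\delta > 0$ such that every $x \in X$ admits a periodic point $q \in P(F)$ with $d(x, q) > 8\delta$: take $\delta < \mathrm{diam}(X)/16$, and given $x$ pick $z \in X$ with $d(x, z) > 10\delta$, then invoke density to produce a periodic $q$ within $\delta$ of $z$. I claim $\delta$ is a sensitivity constant for $F$.

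Given a nonempty open $U \subset X$, by density of $P(F)$ pick a periodic point $p \in U$ and take a ball $W = B_\eta(p) \cap U \subset U$ with $\eta < \delta$. Choose a periodic $q$ with $d(p, q) > 8\delta$ and let $m_q \geq 1$ satisfy $q \in F^{m_q}(q) =: C$, so $C \in Ran(F)$. Applying transitivity of $F$ to the open pair $(W,\; B_\epsilon(C) \cap Ran(F))$ produces $n \geq 1$ and $y \in W$ with $d_H(F^n(y), C) < \epsilon$. Next, I would choose a further shift $l$ so that $n + l$ is a multiple of a suitable ``return period'' of $p$ (ensuring $p \in F^{n+l}(p)$) while $l$ is a multiple of $m_q$ (ensuring $F^l(C) = C$, using that iterating $\widetilde{F}^{m_q}$ on $F^{m_q}(q)$ stabilizes to a $\widetilde{F}^{m_q}$-invariant set containing $q$). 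Uniform continuity of $\widetilde{F}^l$ on $\mathbb{K}(X)$ then allows $d_H(F^{n+l}(y), C)$ to be made as small as we like by shrinking $\epsilon$.

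Comparing $F^{n+l}(p) \ni p$ with $F^{n+l}(y) \approx C \ni q$ in Hausdorff metric yields the lower bound
\[
d_H(F^{n+l}(p),\; C) \;\geq\; \max\Bigl(\inf_{c \in C} d(p, c),\; \inf_{a \in F^{n+l}(p)} d(q, a)\Bigr),
\]
and the main obstacle is ensuring at least one of these infima exceeds $\delta$. The two-directional nature of $d_H$ means that having $p$ and $q$ far apart as points does not by itself separate the sets, since $C$ or $F^{n+l}(p)$ could contain auxiliary points that shadow each other. To overcome this I would refine the choice of $q$ via density of $P(F)$, arranging either that the entire set $C$ lies outside a $2\delta$-neighborhood of $p$, or dually that the (finite) set $F^{n+l}(p)$ avoids a $2\delta$-neighborhood of $q$. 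This is possible because the set-valued orbit of a periodic point is finite in $\mathbb{K}(X)$, each $F^k(p)$ has at most $2^k$ points, and $P(F)$ is dense in $X$, so for $\delta$ small enough a suitable $q$ exists. Then $d_H(F^{n+l}(p), C) > 2\delta$, and combining with the continuity estimate gives $d_H(F^{n+l}(p), F^{n+l}(y)) > \delta$; since $p, y \in W \subset U$, this witnesses sensitivity.
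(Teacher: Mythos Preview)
Your outline correctly follows the classical Banks et al.\ skeleton, and you correctly isolate the genuine new difficulty: separation of the \emph{points} $p$ and $q$ says nothing about the Hausdorff distance between the \emph{sets} $F^{n+l}(p)$ and $C$, since either set can contain auxiliary points shadowing the other. However, your proposed repair does not close this gap. The sensitivity constant $\delta$ must be fixed once, uniformly in $x$ and $U$, \emph{before} any of $p$, $q$, $n$, $l$ are chosen; your phrase ``for $\delta$ small enough a suitable $q$ exists'' makes $\delta$ depend on the particular $p$ (and even on $F^{n+l}(p)$, which itself depends on $n$, hence on the transitivity hit $y$), and this is circular. Nor does density of $P(F)$ in $X$ give any control over where the finite set $C=F^{m_q}(q)$ sits relative to $p$: density places periodic \emph{points} everywhere, but says nothing about keeping their orbit-sets away from a prescribed point.

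The idea you are missing, and which the paper supplies at the very start of its proof, is to choose $q$ so that not merely the point $q$ but the \emph{entire union of the set-valued orbit} $\bigcup_{i\geq 1} F^i(q)$ stays at least $4\delta$ away from $x$. Concretely: fix once and for all two periodic points $p_1,p_2$ whose (finite) orbit-unions in $X$ lie at positive distance $\delta_0$; then for every $x$ at least one of these unions lies outside $B(x,\delta_0/2)$, and one sets $\delta=\delta_0/8$. With this stronger separation the Hausdorff estimate becomes a one-sided computation: since $p\in F^{jn+m}(p)$ and every element of $F^{jn+m-k}(q)$ is more than $4\delta$ from $x$ (hence more than $3\delta$ from $p$), the distance from the single point $p$ to the set $F^{jn+m}(y)$ (which lies within $\delta$ of $F^{jn+m-k}(q)$ in $d_H$) already exceeds $2\delta$, and this alone forces $d_H(F^{jn+m}(p),F^{jn+m}(y))>2\delta$. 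No after-the-fact refinement of $q$ is needed.
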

\begin{proof}
	%Define $d(x,A)=\inf_{a\in A}d(x,a)$, $\forall x\in X, \forall A\subset X$. 
	Firstly, it is claimed that under the assumption of the theorem, there exists $\delta_0>0$ such that for any $x\in X$ there exists $p\in P(F)$ satisfying $$d(x,\mathring{orb}(p,F))\geq \frac{\delta_0}{2}.$$ In fact, for any period points $p_1$ and $p_2$ of $F$ with distinct deleted orbits, let $$\delta_0=d(\mathring{orb}(p_1,F),\mathring{orb}(p_2,F)).$$ Since $d(x,\mathring{orb}(p_1,F))<\frac{\delta_0}{2}$ and $d(x,\mathring{orb}(p_2,F))<\frac{\delta_0}{2}$ cannot hold simultaneously, then $\delta_0>0$ satisfies the requirement. 
	
	Let $\delta=\frac{\delta_0}{8}$. Now we show $\delta$ is a sensitive constant of $F$. For any given $x\in X$ and any given $0<\epsilon<\delta$, by $\overline{P(F)}=X$, there exists $p\in X$ such that $d(p,x)<\epsilon$, $p\in F^m(p)$ and $F^m(p)=F^{jn+m}(p)$ for some $n>0$, some $m>0$ and any $j>0$. Then 
	\begin{equation}\label{eq10}
		d(x,F^m(p))\leq d(x,p)<\epsilon<\delta.
	\end{equation}
	By the above claim, there exists $q\in P(F)$ such that 
	\begin{equation}\label{eq20}
		d_H(\{x\},\mathring{orb}(q,F))\geq 4\delta.
	\end{equation}
	Let $$\mathcal{U}=\bigcap_{i=m}^{n+m}\widetilde{F}^{-i}B_{d_H}(F^i(q),\delta).$$ Then $\mathcal{U}$ is a nonempty open set of $Ran(F)$. Since $F$ is transitive, there exist $y\in B_d(x,\epsilon)$ and $k>0$ such that $F^k(y)\in\mathcal{U}$. Let $j=[\frac{k}{n}]+1$. Then $k\leq jn\leq n+k$, $0\leq jn-k\leq n$, $m\leq jn-k+m\leq n+m$. Thus,
	\begin{equation}\label{eq30}
			 F^{jn+m}(y)=\widetilde{F}^{jn+m-k}(F^k(y))\in\widetilde{F}^{jn+m-k}(\mathcal{U})\subset B_{d_H}(F^{jn+m-k}(q),\delta).
	\end{equation}
By Equation (\ref{eq10}), Equation (\ref{eq20}) and Equation (\ref{eq30}), 
	\begin{align}
	&d_H(F^{jn+m}(p),F^{jn+m(y)}) \nonumber \\   % &在=左边就不用{}
	\geq&d(x,F^{jn+m-k}(q))-d_H(F^{jn+m}(y),F^{jn+m-k}(q))-d(x,F^m(p))\nonumber\\
	>&4\delta-\delta-\delta\nonumber\\
	=&2\delta.\nonumber
\end{align}
Then by  triangle inequality, at least one of $d_H(F^{jn+m}(p),F^{jn+m(x)})>\delta$ and $d_H(F^{jn+m}(y),F^{jn+m(x)})>\delta$ holds true. And $y,p\in B_d(x,\epsilon)$, so $F$ is sensitive.
\end{proof}
Based on Theorem \ref{ppp}, Theorem \ref{thFtra} and Theorem \ref{dc}, the following corollary can be derived. 
\begin{corollary}
		If $f_1(x)=c$ ($\forall x\in X$, $c$ is a constant), $f_2(c)=c$ and $f_2$ is Devaney chaotic, then $F$ is Devaney chaotic.
\end{corollary}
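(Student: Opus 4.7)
The plan is to verify the three conditions defining Devaney chaos one at a time, using the three theorems the corollary cites as ingredients, so the work amounts to checking that the hypotheses of each theorem are met and then chaining the conclusions together.

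First I would establish transitivity of $F$: the hypotheses $f_1 \equiv c$, $f_2(c)=c$, and $f_2$ transitive are exactly the hypotheses of Theorem \ref{thFtra}, so that theorem immediately yields that $F$ is transitive.

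Next I would establish density of periodic points. Because $f_2$ is Devaney chaotic, $\overline{P(f_2)} = X$. For each $p \in P(f_2)$, the assumptions $f_1 \equiv c$ and $f_2(c)=c$ put us in exactly the setup of Theorem \ref{ppp}, which concludes that $p \in P(F)$. Hence $P(f_2) \subseteq P(F)$, and taking closures gives
\begin{equation*}
X = \overline{P(f_2)} \subseteq \overline{P(F)} \subseteq X,
\end{equation*}
so $\overline{P(F)} = X$.

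Finally, with $F$ transitive and $\overline{P(F)} = X$ in hand, Theorem \ref{dc} applies and delivers sensitivity of $F$. Combining the three conclusions gives Devaney chaos of $F$. There is no real obstacle here; the only thing to be careful about is confirming that the hypotheses of Theorem \ref{ppp} and Theorem \ref{thFtra} are literally satisfied (they are, verbatim), so that invoking them is legitimate rather than requiring any adaptation.
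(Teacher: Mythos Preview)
Your proposal is correct and follows exactly the route the paper intends: invoke Theorem \ref{thFtra} for transitivity, Theorem \ref{ppp} pointwise to get $P(f_2)\subseteq P(F)$ and hence $\overline{P(F)}=X$, and then Theorem \ref{dc} for sensitivity. The paper itself merely states that the corollary is derived from these three theorems, so your write-up is in fact more detailed than the original.
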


\section{Conclusions}\label{sec5}

This paper set out to study periodic points, transitivity, sensitivity and Devaney chaos of multiple mappings from the perspective of a set-valued view. This perspective is different from the semigroup-theoretic view that has been previously studied in the context of dynamical systems of iterated function systems. We discussed the relationship between multiple mappings and its continuous self-maps in terms of the above topological properties. It may not only deepen our understanding of continuous self-maps but also enable us to use relatively simple continuous self-maps to comprehend relatively complex multiple mappings. We show that for multiple mappings, transitivity $+$ $\overline{P(F)}=X$ implies sensitivity, then a sufficient condition for multiple mappings to be Devaney chaotic is given. We hope that these conclusions can enrich the achievements in the field of multiple mappings and provide some assistance for further in-depth research in this area.

\end{document}